\def\smallsetminus{\setminus}
\date{}
\theoremstyle{plain}
\newtheorem{prop}{Proposition}[section]
\newtheorem{theorem}[prop]{Theorem}
\newtheorem{lemma}[prop]{Lemma}
\theoremstyle{definition}
\newcommand{\cR}{{\mathcal R}}
\newcommand{\cC}{\mathcal C}
\newcommand{\cH}{\mathcal H}
\newcommand{\cQ}{\mathcal Q}
\newcommand{\fA}{\mathfrak{A}}
\newcommand{\fB}{\mathfrak B}
\newcommand{\fC}{\mathfrak{C}}
\newcommand{\cG}{\mathcal{G}}
\newcommand{\PG}{\mathrm{PG}}
\newcommand{\PGU}{\mathrm{PGU}}
\newcommand{\AG}{\mathrm{AG}}
\newcommand{\GF}{\mathrm{GF}}
\newcommand{\rank}{\operatorname{rank}}
\newcommand{\fx}{\mathfrak x}
\newcommand{\fy}{\mathfrak y}
\begin{document}
\begin{frontmatter}
\title{Intersections of the Hermitian surface with irreducible quadrics in
  $\PG(3,q^2)$, $q$ odd}
\author[AA]{Angela Aguglia}
\ead{angela.aguglia@poliba.it}
\address[AA]{Dipartimento di Meccanica, Matematica e Management, Politecnico di
Bari, Via Orabona 4, I-70126 Bari, Italy}
\author[LG]{Luca Giuzzi\corref{cor1}}
\ead{luca.giuzzi@unibs.it}
\address[LG]{DICATAM - Section of Mathematics,
University of Brescia,
Via Branze 43, I-25123, Brescia, Italy}
\cortext[cor1]{Corresponding author. Tel. +39 030 3715739; Fax. +39 030 3615745}
\begin{abstract}
In  $\PG(3,q^2)$, with $q$ odd,
we determine  the possible intersection
sizes of a Hermitian surface $\cH$ and an irreducible  quadric $\cQ$
having the same tangent plane at a common point $P\in\cQ\cap\cH$.
\end{abstract}
\begin{keyword}
Hermitian surface \sep quadric.

\MSC[2010] 05B25 \sep 51D20 \sep 51E20
\end{keyword}
\end{frontmatter}
\section{Introduction}
The study of the intersection of  hypersurfaces in projective spaces
is a deep classical  problem in algebraic geometry; see \cite{F} for the
general theory.
Many of the usual properties over algebraically closed
fields, e.g. B\'ezout's theorem, fail to fully hold in the finite case.
There are several results on the combinatorial characterization of
the intersection pattern of curves and surfaces
as there is a close relationship between the size of the
intersection of two varieties and the weight distribution of some
functional linear codes; see \cite{L}.
The problem, when both surfaces are of the same degree has been
widely investigated; see for instance
\cite{BH,EHRS10,CG} for the case of quadrics
or \cite{G1,DD1,DD2} for that of Hermitian surfaces.
\par
When one of the variety is Hermitian and the other is a
quadric the problem appears to be more difficult to tackle.
For instance, the possible intersection patterns between Hermitian curves and
conics have been studied in \cite{DDK},
whereas the possible intersections  between a conic and a curve of a
non-classical Buekenhout-Metz unital in $\PG(2,q^2)$
have been determined in \cite{AG}.

In a recent series of papers, the largest values for
the spectrum of the intersection of Hermitian and quadratic varieties
have been investigated, as it is related to the minimum
distance of certain functional codes. We refer to
\cite{E0,E,EHRS,HS,ELX} for codes defined on a Hermitian variety by quadratic
forms and \cite{BB} for the converse, i.e.  codes defined by Hermitian
forms on a quadric. In both cases, it appears that the maximum cardinality
should be attained when one of the varieties splits in the union of hyperplanes,
even if this, in the case of \cite{BB}, is, for the time being, still an open
conjecture.

The setting of the present paper is slightly different. Here we restrict
ourselves to dimension $3$ and assume the characteristic to be odd.
We aim to provide the spectrum of all possible intersection numbers
between a Hermitian surface
and an irreducible quadric, under the further assumption that they
share a common tangent plane.
Our main result is contained in the following Theorem \ref{main1}.

\begin{theorem}\label{main1}
In $\PG(3,q^2)$, with $q$ odd, let $\cH$ and $\cQ$
be respectively a Hermitian surface and
an irreducible quadric  having the same tangent plane $\pi$ at a common 
nonsingular point $P$.
Then, the possible intersection sizes for $\cH\cap\cQ$ in $\PG(3,q^2)$ are
as follows.
\begin{itemize}
\item For $\cQ$ elliptic:
\[q^3-q^2+1,q^3-q^2+q+1, q^3-q+1, q^3+1, q^3+q+1, q^3+q^2-q+1, q^3 + q^2+1. \]
\item For $\cQ$ a $1$--degenerate cone and $q>3$,
\[ q^3-q^2+1,q^3-q^2+q+1, q^3-q+1, q^3+1, q^3+q+1,\]
\[ q^3+q^2-q+1, q^3 + q^2+1,  q^3 + 2q^2-q+1. \]
\item For $\cQ$ hyperbolic:
\[ q^2+1,q^3-q^2+1,  q^3-q^2+q+1, q^3-q+1, q^3+1,  q^3+q+1,
q^3+q^2-q+1, q^3+ q^2+1, \]
\[   q^3+2q^2-q+1,  q^3+2q^2+1,  q^3+3q^2-q+1,   2q^3+q^2+1.  \]
\end{itemize}
\end{theorem}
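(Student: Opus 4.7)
The plan is to set up standard coordinates adapted to the common tangent flag at $P$, normalize $\cH$ to a fixed equation, and then analyze the family of quadrics $\cQ$ sharing this tangent datum. Take $P=(0{:}0{:}0{:}1)$ and $\pi:X_2=0$; since $\PGU(4,q^2)$ acts transitively on the points of $\cH$, we may assume
\[
\cH:\ X_0^{q+1}+X_1^{q+1}+X_2^q X_3+X_2 X_3^q=0,
\]
whose polar at $P$ is precisely $X_2=0$. Any quadric $\cQ$ satisfying the hypotheses has a Gram matrix whose last row and column force its equation into the shape
\[
\cQ:\ X_2 X_3+f(X_0,X_1,X_2)=0,
\]
for a ternary quadratic form $f$; the projective type of $\cQ$ (elliptic, hyperbolic, cone) is detected by the rank and discriminant of its Gram matrix, equivalently by the rank of the $(X_0,X_1)$-part of $f$ together with an explicit discriminant condition.

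I would then split $|\cH\cap\cQ|=|\cH\cap\cQ\cap\pi|+N_{\mathrm{aff}}$, where $N_{\mathrm{aff}}$ counts intersection points outside $\pi$. The planar term is governed by the classical fact that $\pi\cap\cH$ is a degenerate Hermitian curve of rank $2$: the union of $q+1$ lines through $P$. Any line of $\pi$ through $P$ either coincides with one of these $q+1$ generators (meeting $\cH$ in $q^2+1$ points) or is tangent to $\cH$ at $P$ only. Combining this with the description of $\pi\cap\cQ$---the single point $P$ for $\cQ$ elliptic; the generator of the cone through $P$ for $\cQ$ a cone; a pair of lines through $P$ for $\cQ$ hyperbolic---yields planar contributions in $\{1\}$, $\{1,q^2+1\}$, or $\{1,q^2+1,2q^2+1\}$ respectively.

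For the affine term, I would set $X_2=1$, use $\cQ$ to write $X_3=-f(X_0,X_1,1)$, and substitute into the equation of $\cH$, obtaining
\[
X_0^{q+1}+X_1^{q+1}=\Tr\bigl(f(X_0,X_1,1)\bigr),
\]
where $\Tr\colon\GF(q^2)\to\GF(q)$ is the relative trace. Both sides now lie in $\GF(q)$, and fixing a $\GF(q)$-basis of $\GF(q^2)$ identifies $\GF(q^2)^2$ with $\GF(q)^4$. Under this identification the left-hand side becomes a sum of two anisotropic norm forms, which is a rank-$4$ hyperbolic $\GF(q)$-quadratic form (using that $-1$ is a norm from $\GF(q^2)$), while the right-hand side is a $\GF(q)$-valued polynomial of degree at most $2$. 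Consequently $N_{\mathrm{aff}}$ equals the number of $\GF(q)$-rational points of an affine quadric $\phi$ in four variables, and can be read off from the rank and Witt type of $\phi$ via the classical formulas.

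The chief obstacle is the orbit classification of $\cQ$ under the stabilizer of the triple $(\cH,P,\pi)$ in $\PGU(4,q^2)$: this stabilizer does not collapse $\cQ$ to a single canonical form, so several finer invariants must be tracked---the rank of the $(X_0,X_1)$-restriction of $f$, whether its discriminant is a square, and, in the hyperbolic and cone cases, whether the planar lines of $\cQ$ lie on $\cH$. Each admissible combination of invariants produces a specific rank and Witt type for $\phi$, hence a specific value of $N_{\mathrm{aff}}$, and combining with the planar contribution yields the list in the statement. Explicit examples are then needed to witness each listed cardinality---particularly the exceptional values $q^3+2q^2-q+1$, $q^3+3q^2-q+1$ and $2q^3+q^2+1$, which arise only when several lines of $\cQ$ in $\pi$ coincide with generators of $\cH\cap\pi$; the assumption $q>3$ in the cone case is needed to rule out degenerate coincidences among orbits that would blur the enumeration for very small $q$.
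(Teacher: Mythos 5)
Your strategy coincides with the paper's own: the same normalization of $\cH$ and of the flag $(P,\pi)$, the same shape $X_2X_3+f(X_0,X_1,X_2)=0$ for $\cQ$, the same split of $|\cH\cap\cQ|$ into a planar contribution plus an affine count, and the same reduction of the affine count to the number of points of a quadric in $\AG(4,q)$ obtained by expanding over a $\GF(q)$-basis of $\GF(q^2)$. The planar contributions $\{1\}$, $\{1,q^2+1\}$, $\{1,q^2+1,2q^2+1\}$ are also correct. But the proposal stops exactly where the real work begins, and the missing step is not routine bookkeeping.

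The gap is the compatibility analysis between the two counts. The affine quadric $\phi\subset\AG(4,q)$ has its own part at infinity, a quadric of $\PG(3,q)$, and the point-count formulas depend on its rank and character; meanwhile the planar contribution depends on the configuration $\cC_{\infty}=\cH\cap\cQ\cap\pi$, which lives over $\GF(q^2)$. These two pieces of data are correlated, and unless you prove exactly which pairs occur you cannot derive the stated list. Concretely: when the quadratic part at infinity of $\phi$ drops to rank $2$, the affine count is $2q^3-q^2$ (pair of solids) or $q^2$ (a plane); if the pair-of-solids case could coexist with $\cC_{\infty}$ equal to a single point or a single line, the totals $2q^3-q^2+1$ and $2q^3+1$ would appear, and neither is in the theorem. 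The paper needs a normalization lemma (using the stabilizer of $(\cH,P,\pi)$ to put $(a,b,c)$ into a canonical form separately for each shape of $\cC_{\infty}$) plus two further lemmas showing that rank $2$ at infinity together with $\cC_{\infty}$ a pair of lines forces the part at infinity of $\phi$ to be a pair of planes, while rank $2$ together with $\cC_{\infty}=\{P\}$ forces it to be a line. You also do not address why the degenerate ranks --- hence the extreme values $q^2+1$ and $2q^3+q^2+1$ --- occur only for $\cQ$ hyperbolic: the paper proves that the form at infinity always has rank at least $2$, and that rank exactly $2$ forces $4ab-c^2$ to be a nonzero square, i.e.\ $\cQ$ hyperbolic; without this the elliptic and cone lists cannot be shown to be shorter. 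Finally, the hypothesis $q>3$ in the cone case has a specific origin (for $q=3$ the relevant determinant at infinity equals $1-4a^{q+1}$, and requiring it to be a nonzero square forces $a=0$, killing two subcases), which is sharper than the generic ``small $q$ degeneracies'' you invoke, and each listed cardinality still needs an explicit witness.
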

\noindent
The proof of this theorem is contained in Section \ref{PT}.

In Section \ref{EC} we characterize
the geometric configurations corresponding respectively
to the minimum and maximum cardinality allowable: in both cases,
detailed in Theorems \ref{ppt1} and \ref{ppt2}, the quadric
$\cQ$ is hyperbolic and in permutable position with
the Hermitian surface $\cH$.

\section{Proof of Theorem \ref{main1}}
\label{PT}

Fix a projective frame in $\PG(3,q^2)$ with homogeneous coordinates
$(J,X,Y,Z)$, and consider the affine space $\AG(3,q^2)$
whose infinite hyperplane $\Sigma_{\infty}$ has equation $J=0$.
Then, $\AG(3,q^2)$ has affine coordinates $(x,y,z)$,
where $x=X/J$, $y=Y/J$ and $z=Z/J$.

Since all non-degenerate Hermitian surfaces are projectively equivalent,
we can assume without loss of generality $\cH$ to have affine equation
\begin{equation}\label{eq}
z^q+z=x^{q+1}+y^{q+1}.
\end{equation}
The unitary group $\PGU(4,q)$ is transitive on the points of $\cH$;
see \cite{S}.
Thus, we can also suppose $P\in\cH\cap\cQ$ to be
$P=P_{\infty}(0,0,0,1)$;
the common tangent plane $\pi$ between $\cQ$ and
$\cH$ at $P$ is then $\Sigma_{\infty}$.
Under the aforementioned assumptions,
the equation of the irreducible quadric $\cQ$  is of the form
\begin{equation}\label{eq2}
z=ax^{2}+by^{2}+cxy+dx+ey+f,
\end{equation}
where $(a,b,c)\neq (0,0,0)$ and $\Delta=4ab-c^2\neq 0$
when $\cQ$ is non-singular.

Write $\cC_{\infty}:=\cQ\cap\cH\cap\Sigma_{\infty}$.
When $\cQ$ is elliptic, that is $\Delta$ is a nonsquare in $\GF(q^2)$,
the point $P_{\infty}$ is, clearly, its only point at infinity of
the intersection; that is
$\cC_{\infty}=\{P_{\infty}\}$.
The structure of
$\cC_{\infty}$ when $\cQ$ is hyperbolic or a $1$--degenerate cone, is
detailed by the following lemma.

\begin{lemma} If $\cQ$ is a $1$--degenerate cone, then $\cC_{\infty}$
  consists of either one or $q^2+1$ points on a line.
  When $\cQ$ is a hyperbolic quadric, then $\cC_{\infty}$ consists of
  either one, or $q^2+1$ or $2q^2+1$ points. All cases may
  actually occur.
\end{lemma}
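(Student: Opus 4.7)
The plan is to reduce everything to a line configuration in the plane $\Sigma_\infty$. Homogenizing \eqref{eq} gives $Z^qJ+ZJ^q=X^{q+1}+Y^{q+1}$, so $\cH\cap\Sigma_\infty$ is cut out, with $Z$ free, by $X^{q+1}+Y^{q+1}=0$. Since $-1$ is a norm from $\GF(q^2)$ to $\GF(q)$, this form factors as a product of $q+1$ distinct linear forms $X-\lambda Y$ with $\lambda^{q+1}=-1$; hence $\cH\cap\Sigma_\infty$ is a union of $q+1$ lines through $P_\infty$. Likewise, homogenizing \eqref{eq2} and setting $J=0$ shows that $\cQ\cap\Sigma_\infty$ is the zero set (with $Z$ free) of $aX^2+cXY+bY^2$, again a union of lines through $P_\infty$.

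The next step is to analyze the factorization of $aX^2+cXY+bY^2$ via its discriminant $c^2-4ab=-\Delta$. Since $q$ is odd we have $q^2\equiv 1\pmod 4$, so $-1$ is a square in $\GF(q^2)$; consequently $-\Delta$ is a nonzero square if and only if $\Delta$ is. Thus the cone case $\Delta=0$ produces a single line through $P_\infty$ inside $\cQ\cap\Sigma_\infty$, while the hyperbolic case produces two distinct such lines $\ell_1,\ell_2$.

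Now every line through $P_\infty$ in $\Sigma_\infty$ is tangent to $\cH$ at $P_\infty$, so it either meets $\cH$ only at $P_\infty$ or lies entirely in $\cH$; the latter happens precisely for the $q+1$ Hermitian generators identified above. Counting how many of the lines composing $\cQ\cap\Sigma_\infty$ are such generators yields $|\cC_\infty|\in\{1,\,q^2+1\}$ in the cone case and $|\cC_\infty|\in\{1,\,q^2+1,\,2q^2+1\}$ in the hyperbolic case.

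Finally, to show every value is attained I would exhibit explicit quadrics. For the cone: $z=x^2$ gives the Hermitian-free case $|\cC_\infty|=1$, while $z=(x-\lambda y)^2$ with $\lambda^{q+1}=-1$ gives $|\cC_\infty|=q^2+1$. For the hyperbolic case: $z=xy$ gives $|\cC_\infty|=1$; $z=x(x-\lambda y)$ with $\lambda^{q+1}=-1$ gives $|\cC_\infty|=q^2+1$; and $z=(x-\lambda y)(x-\mu y)$ for distinct $\lambda,\mu$ with $\lambda^{q+1}=\mu^{q+1}=-1$ gives $|\cC_\infty|=2q^2+1$. The only point requiring care is to verify that each of these examples is a quadric of the claimed type (rank three with $\Delta=0$ for the cones; $\Delta$ a nonzero square for the hyperbolics), which is a routine check on the associated matrix; the last example also relies on the fact that $t^{q+1}=-1$ has exactly $q+1\geq 4$ solutions in $\GF(q^2)$, so two distinct values are available. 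No step is a genuine obstacle; the main content is recognizing that the tangent-plane section of $\cH$ splits into the $q+1$ Hermitian generators and matching them against the at-most-two components of $\cQ\cap\Sigma_\infty$.
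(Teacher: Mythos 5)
Your proof is correct, and it rests on the same basic observation as the paper's: both $\cH\cap\Sigma_\infty$ and $\cQ\cap\Sigma_\infty$ split into lines through $P_\infty$, so $\cC_\infty$ is determined by how many of the (at most two) components of $\cQ\cap\Sigma_\infty$ are among the $q+1$ generators of $\cH$ through $P_\infty$. Where you diverge is in how the cases are settled. The paper works with the system \eqref{sinf} directly: for the cone it tests whether the vertex lies on $\cH$ (the condition $\|c\|=-4\|a\|$), and for the hyperbolic quadric it intersects with a transversal line and sorts the roots $x_{1/2}=(-c\pm s)/(2a)$ according to their norms, thereby producing explicit arithmetic conditions on $a,b,c$ for each intersection size. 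You instead factor both binary forms --- $X^{q+1}+Y^{q+1}=\prod_{\lambda^{q+1}=-1}(X-\lambda Y)$ and $aX^2+cXY+bY^2$ via its discriminant $-\Delta$ (using that $-1$ is a square in $\GF(q^2)$) --- and simply match linear factors; attainability is then handled by exhibiting explicit quadrics $z=x^2$, $z=(x-\lambda y)^2$, $z=xy$, $z=x(x-\lambda y)$, $z=(x-\lambda y)(x-\mu y)$. Your route is cleaner and makes the ``all cases may actually occur'' clause fully explicit, which the paper only establishes implicitly through the satisfiability of its norm conditions; the paper's computations, on the other hand, yield coefficient-level criteria in the spirit of the normal forms it sets up later in Lemma \ref{main4}. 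The only points needing care in your version --- distinctness of the $q+1$ values of $\lambda$, the ranks and discriminants of the sample quadrics, and the availability of two distinct $\lambda,\mu$ with $\lambda^{q+1}=\mu^{q+1}=-1$ --- are all routine and you address them.
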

\begin{proof}
As both $\cH\cap\Sigma_{\infty}$ and $\cQ\cap\Sigma_{\infty}$ split in
lines through $P_{\infty}$, it is straightforward to
see that the only possibilities for $\cC_{\infty}$ are the following:
when $\cQ$ is a $1$--degenerate cone  $\cC_\infty$ is either a point
or one  line,  whereas, when $\cQ$ is a hyperbolic quadric, $\cC_{\infty}$
consists of either a point or one or two lines.
Actually, the set $\cC_{\infty}$ is determined by the following equations
\begin{equation}\label{sinf}
\begin{cases}
  x^{q+1}+y^{q+1}=0 \\
  ax^2+by^2+cxy=0.
\end{cases}
\end{equation}
Suppose $\cQ$ to be a $1$--degenerate cone.
Under our assumptions, when $c=0$ we can assume either $a=0$ or $b=0$.
In both cases $\cC_{\infty}$ is the point $P_{\infty}$.

If $c\neq 0$, then the vertex of $\cQ$ is the point $V=(0,-c,2a,e-dc)$.
As $P_{\infty}$ is not the vertex of $\cQ$, the conic
$\cC_{\infty}$ consists of a line $\ell$ if and only if
$V\in\cH$.
This is the same as to require $c^{q+1}+4a^{q+1}=0 $, that is $||c||=-4||a||$, where $||x||$ denotes the norm of $x\in\GF(q^2)$ over $\GF(q)$.

Suppose now that $\cQ$ is a hyperbolic quadric.
We consider the intersection of $\cC_{\infty}$
with a line $\ell$ of $\Sigma_{\infty}$ not through $P_{\infty}$.
When $a=0$, let $\ell:x=1$; then from \eqref{sinf} we get
\begin{equation}\label{sinf3b}
\begin{cases}
  1+y^{q+1}=0 \\
  y(by+c)=0.
\end{cases}
\end{equation}
This system admits solution if and only if $b\neq 0$ and
$||cb^{-1}||=-1$. In this case, clearly, its solution is unique
and $\cC_{\infty}$ is just a line.
An analogous argument applies when $b=0$ and $a\neq 0$.

Suppose now $a,b\neq 0$. We can take $\ell: y=1$.
Then, from System \eqref{sinf} we obtain
\begin{equation}\label{sinf2}
\begin{cases}
  x^{q+1}+1=0 \\
  x^2+\frac{c}{a}x+\frac{b}{a}=0.
\end{cases}
\end{equation}
Set $s^2=c^2-4ab\neq 0$.
The solutions of the second equation in  \eqref{sinf2} are
\[x_{1/2}=\frac{-c\pm s}{2a}.\]
If $c=0$, then \eqref{sinf2} has either
$2$ or $0$ solutions, according as $||s||=-4||a||$ or not.
Consequently,
$\cC_{\infty}$ consists of either $2q^2+1$ points or
just one point, namely $P_{\infty}$.

If $c\neq 0$ and $||s-c||=||s+c||=-4||b||$, then
$x_1^{q+1}=x_2^{q+1}=-1$;  thus, $\cC_{\infty}$
contains  $2q^2+1$ points.

If  $c\neq 0$, $||s-c||\neq ||s+c||$ and either
$||s+c||=-4||b||$ or  $||s-c||=-4||b||$, then
$\cC_{\infty}$ consists of $q^2+1$ points.
In all of the remaining cases, $\cC_{\infty}=\{P_{\infty}\}$.
\end{proof}

\begin{lemma}\label{main2}
Under the assumptions of Theorem \ref{main1},
the possible  sizes of $(\cH\cap\cQ)\setminus\pi$
are
either
\[q^3-q^2,q^3-q^2+q, q^3-q, q^3, q^3+ q, q^3+q^2-q,q^3+q^2\]
when $\cQ$ is elliptic or a $1$--degenerate cone
or
\[ q^2, q^3-q^2, q^3-q^2+q, q^3-q, q^3, q^3+ q, q^3+q^2-q,q^3+q^2, 2q^3-q^2\]
 when $\cQ$ is hyperbolic.
\end{lemma}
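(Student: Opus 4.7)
The plan is to reduce $|(\cH\cap\cQ)\setminus\pi|$ to the enumeration of $\GF(q)$-rational points of an affine quadric in $\AG(4,q)$, and then invoke the classical point counts for such quadrics.

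For each $(x,y)\in\GF(q^2)^2$, the vertical line $\ell_{(x,y)}:=\{(x,y,t):t\in\GF(q^2)\}$ meets $\cQ$ in the unique point $(x,y,\phi(x,y))$, where $\phi(x,y)=ax^{2}+by^{2}+cxy+dx+ey+f$, and meets $\cH$ in the $q$ solutions of $t^q+t=x^{q+1}+y^{q+1}$. Hence $\ell_{(x,y)}$ contributes one point to $(\cH\cap\cQ)\setminus\pi$ exactly when $\Tr(\phi(x,y))=\mathrm{N}(x)+\mathrm{N}(y)$ and none otherwise, where $\Tr$ and $\mathrm{N}$ denote the trace and the norm of $\GF(q^2)/\GF(q)$. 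Consequently
\[ |(\cH\cap\cQ)\setminus\pi|=\#\bigl\{(x,y)\in\GF(q^2)^{2}:\Tr(\phi(x,y))=\mathrm{N}(x)+\mathrm{N}(y)\bigr\}. \]

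I now fix $\theta\in\GF(q^2)$ with $\theta^{2}=\epsilon\in\GF(q)$ a non-square and write $x=x_{0}+\theta x_{1}$, $y=y_{0}+\theta y_{1}$, together with analogous decompositions of $a,b,c,d,e,f$. Both sides of the previous condition take values in $\GF(q)$, and the expansion produces a quadratic identity $G(x_{0},x_{1},y_{0},y_{1})=0$ with $G\in\GF(q)[x_{0},x_{1},y_{0},y_{1}]$. Hence $|(\cH\cap\cQ)\setminus\pi|$ equals the number of $\GF(q)$-rational points of the affine quadric $\cV\colon G=0$ in $\AG(4,q)$. A direct computation, most easily performed in the $\GF(q^{2})$-basis $(x,x^{q},y,y^{q})$, shows that the Gram determinant of the quadratic part $G_{2}$ of $G$ equals, up to squares in $\GF(q)^{\ast}$,
\[ \mathrm{N}(\Delta)-4\mathrm{N}(a)-4\mathrm{N}(b)-2\mathrm{N}(c)+1,\qquad \Delta=4ab-c^{2}, \]
thereby linking the rank and the Witt type of $G_{2}$ to the projective class of $\cQ$: elliptic ($\Delta$ non-square), hyperbolic ($\Delta$ non-zero square) or $1$-degenerate ($\Delta=0$).

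Denote by $\overline{\cV}\subset\PG(4,q)$ the projective closure of $\cV$ and by $\Pi$ the hyperplane at infinity of $\AG(4,q)$. The equality $|\cV\cap\AG(4,q)|=|\overline{\cV}|-|\overline{\cV}\cap\Pi|$ reduces the computation to the rank and Witt type of $\overline{\cV}$ in $\PG(4,q)$ and of its section $\overline{\cV}\cap\Pi$ in $\Pi\cong\PG(3,q)$, the latter being governed entirely by $G_{2}$. Applying the standard point-count tables for quadrics in $\PG(4,q)$ and $\PG(3,q)$ to the finite list of rank/Witt-type pairs compatible with the discriminant formula above, separately for each of the three classes of $\cQ$, yields exactly the sizes claimed. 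The extremal values $q^{2}$ and $2q^{3}-q^{2}$ correspond, respectively, to $G$ splitting into two conjugate $\GF(q^{2})$-linear factors not defined over $\GF(q)$ (rank $2$, elliptic type) and into two $\GF(q)$-linear factors (rank $2$, hyperbolic type); in either case $\cQ$ itself must split over $\GF(q^{2})$ into two hyperplanes, so these sizes arise only when $\cQ$ is hyperbolic.

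The main obstacle is the casework behind the previous paragraph. Of the roughly two dozen a priori possible rank/Witt-type pairs for $(\overline{\cV},\overline{\cV}\cap\Pi)$, several would yield sizes outside the stated list and must be ruled out by combining the discriminant formula with the standing hypothesis that $\cQ$ is irreducible and shares the tangent plane $\pi$ with $\cH$ at $P$. Once these impossible configurations have been excluded, the proof reduces to routine substitution in the quadric point-count tables.
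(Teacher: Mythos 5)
Your reduction is exactly the paper's: substitute $z$ from the quadric equation into the Hermitian one (equivalently, your vertical-line/trace condition), expand over $\GF(q)$ to obtain an affine quadric in $\AG(4,q)$, and count points via the classification of its projective closure and of its section at infinity; your discriminant agrees with the paper's $\det A_\infty=(c^2-4ab)^{q+1}-4a^{q+1}-4b^{q+1}-2c^{q+1}+1$. The problem is that the substance of the lemma lies precisely in the casework you defer as ``routine'', and the one justification you do offer for the hardest exclusion does not hold up.

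Two exclusions carry the whole statement. First, you must show that the quadratic part $G_2$ has rank at least $2$: if the quadric at infinity could be a repeated plane (rank $1$), then $\cV$ splitting into two real solids meeting in a plane at infinity would give $2q^3$ affine points, a size not on your list; the paper rules this out by an explicit matrix argument using that the characteristic is odd. Second, and harder, you must show that $\rank G_2=2$ forces $\cQ$ to be hyperbolic --- this is exactly what confines the sizes $q^2$ and $2q^3-q^2$ to the hyperbolic case. Your stated reason, that ``in either case $\cQ$ itself must split over $\GF(q^2)$ into two hyperplanes'', cannot be right: $\cQ$ is irreducible by hypothesis. Perhaps you meant that $\cV$ splits, or that $\cQ\cap\pi$ splits into two lines, but neither is established by what you wrote, and the implication from $\rank G_2=2$ to ``$4ab-c^2$ is a nonzero square in $\GF(q^2)$'' is a genuine computation: the paper extracts from the vanishing of all $3\times 3$ minors of $A_\infty$ the relation $c^{q+1}+4b^{q+1}=1$ (when $c\neq 0$), whence $4ab-c^2=-c^{1-q}$, a square since $q$ is odd, with a separate argument for $c=0$. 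Until these two facts are proved, the ``finite list of compatible rank/Witt-type pairs'' has not actually been determined, and the claimed spectrum does not follow from the point-count tables.
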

\begin{proof}

We use the same setup as at the beginning of this section.
Thus, we need to compute  
the
number of common points between $\cH$ and
$\cQ$ in $\AG(3,q^2)$; hence, 
we have to study the following system of equations
\begin{equation}
    \left\{\begin{array}{l}\label{sis1}
    z^q+z=x^{q+1}+y^{q+1}     \\
       z=ax^{2}+by^{2}+cxy+dx+ey+f.
      \end{array}\right.
  \end{equation}
Once the value of $z$ is recovered from the second equation and substituted
in the first, we obtain
\begin{equation}
\begin{array}{l}\label{eqc}
a^qx^{2q}+b^qy^{2q}+c^qx^qy^q+d^qx^q+e^qy^q+f^q
+ax^{2}+by^{2}\\+cxy+dx+ey+f
=x^{q+1}+y^{q+1}.
\end{array}
\end{equation}
We now need to determine the number of solutions of \eqref{eqc}
as $a,b,c,d,e,f$
vary in $\GF(q^2)$.
To this purpose, choose a primitive element $\beta$ of $\GF(q^2)$.
As $q$ is odd, $\varepsilon=\beta^{(q+1)/2}\in\GF(q^2)$ and
$\varepsilon^q=-\varepsilon$; furthermore, $\varepsilon^2$ is a primitive
element of $\GF(q)$.
Since $\varepsilon\not\in\GF(q)$,
it is immediate to see that $\{1,\varepsilon\}$ is a basis of
$\GF(q^2)$, regarded as a vector space over $\GF(q)$.
We shall consequently write
each $x\in\GF(q^2)$ as a $\GF(q)$--linear
combination $x=x_0+x_1\varepsilon$ with $x_0,x_1\in\GF(q)$.

By regarding $\GF(q^2)$ as
a $2$--dimensional vector space over
$\GF(q)$, \eqref{eqc} can be rewritten as
\begin{equation}
\begin{array}{l}
\label{eqodd1}
(2a_0-1)x_0^2+(2a_0+1)\varepsilon^2x_1^2+4\varepsilon^2a_1x_0x_1+(2b_0-1)y_0^2+\varepsilon^2(2b_0+1)y_1^2+\\
4\varepsilon^2b_1y_0y_1+2c_0x_0y_0+2\varepsilon^2c_0x_1y_1+2\varepsilon^2c_1x_0y_1+2\varepsilon^2c_1x_1y_0+\\
2d_0x_0+2\varepsilon^2d_1x_1+2e_0y_0+2\varepsilon^2e_1y_1+2f_0=0.
\end{array}
\end{equation}
It is thus possible to consider the solutions $(x_0,x_1,y_0,y_1)$ of
\eqref{eqodd1} as the affine points of the (possibly degenerate)
 quadric hypersurface
$\Xi$ of $\PG(4,q)$ 
associated to the symmetric $5\times 5$ matrix
 \[A=\begin{pmatrix}
   (2a_0-1) &2\varepsilon^2a_1&c_0& \varepsilon^2c_1 & d_0\\
   2\varepsilon^2a_1 & (2a_0+1)\varepsilon^2  & \varepsilon^2c_1&\varepsilon^2c_0 &\varepsilon^2d_1 \\
   c_0 & \varepsilon^2c_1& (2b_0-1) &2\varepsilon^2b_1 & e_0 \\
   \varepsilon^2c_1&\varepsilon^2c_0&2\varepsilon^2b_1 & (2b_0+1)\varepsilon^2 & \varepsilon^2 e_1 \\
   d_0 &\varepsilon^2d_1 & e_0 & \varepsilon^2 e_1 & 2f_0\\
 \end{pmatrix}. \]
The above argument shows that the number of affine points of $\Xi$ equals
the number of points in
$\AG(3,q^2)$ which lie in $\cH\cap\cQ$; using the results of
\cite{HT} it is possible to actually count these points.
To this purpose, first  determine the number  points
at infinity of $\Xi$.
These points are those of the quadric $\Xi_\infty$ of
$\PG(3,q)$  associated to the symmetric $4\times 4$
block matrix
\[ A_{\infty}=\begin{pmatrix} \fA & \fC \\
                              \fC & \fB
                            \end{pmatrix}, \]
where
\[ \fA:=\begin{pmatrix}
  2a_0-1 & 2\varepsilon^2a_1 \\
  2\varepsilon^2a_1 & (2a_0+1)\varepsilon^2 \\
  \end{pmatrix},\quad
 \fB:=\begin{pmatrix}
     2b_0-1 &2\varepsilon^2b_1\\
     2\varepsilon^2b_1 & (2b_0+1)\varepsilon^2\\
  \end{pmatrix}, \]
  \[
 \fC:=\begin{pmatrix}
    c_0& \varepsilon^2c_1\\
    \varepsilon^2c_1&\varepsilon^2c_0 \\
  \end{pmatrix}.
\]
Denote by $(t,u,v,w)$ homogeneous coordinates
for this $\PG(3,q)$.
Observe that
 \begin{equation}\label{main3e}
      \det A_{\infty}=(c^2-4ab)^{q+1}-4a^{q+1}-4b^{q+1}-2c^{q+1}+1.
\end{equation}
We first show
 that $\rank A_{\infty}\geq 2$.
Actually, if it were $\rank A_{\infty}=1$, then
\begin{equation}\label{rank1}
  \frac{1}{\varepsilon^2}\det \fC=
  c_0^2-\varepsilon^2c_1^2=0.
 \end{equation}
As $\varepsilon^2$ is a non-square in $\GF(q)$,
Condition \eqref{rank1} is equivalent to $c_0=c_1=0$.
Thus, $A_{\infty}$ would be of the form
\[A_{\infty}=\begin{pmatrix}
    2a_0-1 &2\varepsilon^2a_1&0& 0\\
2\varepsilon^2a_1 & (2a_0+1)\varepsilon^2  & 0&0 \\
    0 & 0& 2b_0-1 &2\varepsilon^2b_1\\
     0&0&2\varepsilon^2b_1 & (2b_0+1)\varepsilon^2\\
      \end{pmatrix}. \]
Denote by $R_1,R_2,R_3,R_4$ the rows of $A_{\infty}$.
For $\rank A_{\infty}=1$ and $R_1$ not  null,
there should be $\alpha_i$ such that
$R_i=\alpha_i R_1$ for $2\leq i\leq 4$.
As the last two entries of $R_1$ are $0$,
$\alpha_3=\alpha_4=0$ and the matrix $\fB$
would be null.
This gives $b_0= 2^{-1}=-2^{-1}$, a contradiction as the
characteristic of $\GF(q)$ is odd.
The case in which $R_1$ is null, but a different row is not,
is entirely analogous.
\par
Our next step is to prove
that when  $\rank A_{\infty}=2$, then $\cQ$ is hyperbolic.
Recall that a quadric $\cQ$ of the form
\eqref{eq2} is hyperbolic if and only if $4ab-c^2$ is a non-zero square
in $\GF(q^2)$.
We distinguish two cases.
\begin{itemize}
\item
Suppose $c=0$; since $q$ is odd,  neither $\fA$ nor $\fB$ can be null matrices. Furthermore, $\rank A_{\infty}=2$ implies $(a,b)\neq(0,0)$ and
$\det \fA=\det \fB=0$.
Hence, $a^{q+1}=b^{q+1}=2^{-2}$ and
$(ab^{-1})^{q+1}=1$. In particular, $ab^{-1}\in\langle\varepsilon^{q-1}\rangle$
is a square in $\GF(q^2)$. Consequently, $4ab\neq 0$ is also a square
in $\GF(q^2)$. As $c^2-4ab\neq 0$, the quadric $\cQ$ is non-singular
and, thus,   hyperbolic.
\item
Assume $c\neq 0$; then, $\det \fC\neq 0$.
Denote by $A^{ij}$ the $3\times 3$ minor of $A_{\infty}$ obtained by deleting
its $i$--th row and $j$--th column.
Clearly, as $\rank A_{\infty}=2$, we have
$\det A^{ij}=0$ for any $1\leq i,j\leq 4$.
In particular, the following system of equations holds:
\begin{equation}\label{eqr2} \begin{cases}
 \frac{1}{4\varepsilon^4}\det A^{41}-\frac{1}{4\varepsilon^4}\det A^{32}=(a_0-b_0)c_1+(b_1-a_1)c_0=0 \\
 \frac{1}{4\varepsilon^4}\det A^{31}-\frac{1}{4\varepsilon^2}\det A^{42}=(a_1+b_1)\varepsilon^2c_1-(a_0+b_0)c_0=0.
\end{cases} \end{equation}
As $\varepsilon^2c_1^2-c_0^2=-||c||\neq 0$, System \eqref{eqr2} has
exactly one solution in $a_0,a_1$, namely
\begin{equation}
\label{a0a1}
 a_0=\frac{(-b_0c_1+2b_1c_0)c_1\varepsilon^2-b_0c_0^2}{c^{q+1}},\
 a_1=\frac{(b_1c_1\varepsilon^2-2b_0c_0)c_1+b_1c_0^2}{c^{q+1}}.
\end{equation}
Replacing these values in $A^{ij}$ we get
$\det A^{ij}=0$ if and only if $c^{q+1}+4b^{q+1}=1$.
Thus,
\[ 4ab-c^2=-\frac{c}{c^q}\left(c^{q+1}+4b^{q+1}\right)=\frac{-1}{c^{q-1}}. \]
Since
 $q$ is odd, $4ab-c^2\neq 0$ is a square of $\GF(q^2)$ and $\cQ$ is hyperbolic.
\end{itemize}
Now we determine the number $N$ of affine points of $\Xi$;  
$N=|\Xi|-|\Xi_{\infty}|$.
  Exactly one of the following cases
(C\ref{C1})--(C\ref{oh2}) happens.
\begin{enumerate}[(C1)]
\item\label{C1}
  $\det A\neq 0$,  $\det A_{\infty}\neq 0$  and $\det A_{\infty}$ is a square.

In this case, $\Xi$ is a parabolic quadric,  $\Xi_{\infty}$ is a hyperbolic quadric  and
\[N= (q+1)(q^2+1)-(q+1)^2=q^3-q\] 

 \item\label{C2} $\det A\neq 0$,  $\det A_{\infty}\neq 0$  and $\det A_{\infty}$
   is a nonsquare.

  Here the quadric $\Xi_{\infty}$ is  elliptic  and 
   \[N=  (q+1)(q^2+1)-(q^2+1)=q^3+q \] 

\item\label{C3}
  $\det A=0$,  $\det A_{\infty}\neq 0$  and $\det A_{\infty}$ is a square.

 We have that $\Xi$ is a cone  projecting a hyperbolic quadric of $\PG(3,q)$;
 thus 
 \[ N= q(q+1)^2+1-(q+1)^2=q^3+q^2-q. \]

\item\label{C4} $\det A= 0$,  $\det A_{\infty}\neq 0$  and $\det A_{\infty}$
  is a nonsquare.

In this case  $\Xi $ is a cone  projecting an elliptic
quadric of $\PG(3,q)$ and thus 
\[ N= q(q^2+1)+1-(q^2+1)=q^3-q^2+q \] 

\item\label{C5} $\rank A=4$, $\rank A_{\infty}=3$.

We get that $\Xi_{\infty}$ is a cone comprising the join of a point to a conic.
Thus, either
\[N=q(q+1)^2+1-[q(q+1)+1]=q^3+q^2\] or
\[N=q(q^2+1)+1-[q(q+1)+1]=q^3-q^2,\]
according as $\Xi$ is a cone  projecting
a hyperbolic  or an elliptic  quadric.

\item\label{C6} $\rank A=\rank A_{\infty}=3$.

Here $\Xi$  is the join of a line to a conic; so 
\[N= q^3+q^2+q+1-(q^2+q+1)=q^3\] 

\item\label{oh1}\label{C7} $\rank A=3$, $\rank A_{\infty}=2$.

We get that either \[N=q^3+q^2+q+1-(2q^2+q+1)=q^3-q^2\] or
\[N=q^3+q^2+q+1-q-1=q^3+q^2,\]
according as $\Xi_{\infty}$ is a pair of planes  or  a line.

\item\label{oh2}\label{C8} $\rank A=\rank A_{\infty}=2$.

 In this case either
  $\Xi$ is a pair of solids and $\Xi_{\infty}$ is a pair of planes
  or $\Xi$ is a plane and
  $\Xi_{\infty}$ is a line. Thus we get either
 \[N=2q^3+q^2+q+1-(2q^2+q+1)=2q^3-q^2\] or
\[N=q^2+q+1-(q+1)=q^2.\]
\end{enumerate}
Observe cases (C\ref{oh1}) and (C\ref{oh2}) are possible only when
$\cQ$ is hyperbolic.
\end{proof}
We now need
to provide compatibility conditions between
the set of the affine points of $\cH \cap \cQ$ and $\cC_{\infty}$, as
to explicitly determine $|\cH\cap\cQ|$.
In order to restrict the values of some of the parameters of \eqref{eq2}
we use a geometric argument.
\begin{lemma}\label{main4}
  If $\cQ$ is a hyperbolic quadric, we can assume without
  loss of generality:  \begin{enumerate}
  \item $b=0$, and $||c||\neq-||a||$
    if $\cC_{\infty}$ is just the point $P_\infty$;
  \item $b=0$, $c=\beta^{{q-1}/2}a$ if $\cC_{\infty}$ is a line;
  \item $b=-\beta^{(q-1)}a$, $c=0$ if $\cC_{\infty}$ is the union of
    two lines.
  \end{enumerate}
  When $\cQ$ is a cone, we can assume without loss of generality:
  \begin{enumerate}
    \item $b=c=0$ if $\cC_{\infty}$ is a point;
    \item $b=\beta^{q-1}a$, $c=2\beta^{(q-1)/2}a$ if $\cC_{\infty}$ is a line.
    \end{enumerate}
\end{lemma}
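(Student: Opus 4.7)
The plan is to exploit the stabiliser $G$ of the pair $(P_\infty,\cH)$ inside $\PGU(4,q)$ in order to normalise the triple $(a,b,c)$ in~\eqref{eq2}. I will first write down three families of elements of $G$ that suffice for the reduction:
(i) the unitary transformations of $(x,y)$ with respect to $x^{q+1}+y^{q+1}$, acting trivially on $z$;
(ii) the diagonal scalings $(x,y,z)\mapsto(\lambda x,\mu y,\lambda^{q+1}z)$ with $\lambda^{q+1}=\mu^{q+1}\in\GF(q)^*$;
(iii) the translations $(x,y,z)\mapsto(x+\alpha,y+\beta,z+\alpha^qx+\beta^qy+\gamma)$ subject to $\gamma^q+\gamma=\alpha^{q+1}+\beta^{q+1}$.
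A direct substitution in~\eqref{eq2} shows that the transformations of type~(iii) leave $(a,b,c)$ unchanged and affect only $(d,e,f)$; hence the normalisation of the quadratic part must be carried out inside the subgroup generated by (i) and (ii), which acts on the binary form $Q(x,y):=ax^2+by^2+cxy$.

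The key observation is that the zeros of $Q$ in $\PG(1,q^2)$ are exactly the directions of the lines making up $\cQ\cap\Sigma_{\infty}$, while the Hermitian-isotropic points (the zeros of $x^{q+1}+y^{q+1}$) correspond to the $q+1$ lines of $\cH\cap\Sigma_{\infty}$. Therefore the description of $\cC_{\infty}$ given in the previous lemma translates into a statement about how many zeros of $Q$ are Hermitian-isotropic: in the hyperbolic case, $Q$ has two distinct zeros and either $0$, $1$ or $2$ of them are isotropic (which matches the three sub-cases of the statement), whereas in the cone case $Q$ has a single double zero, which is either non-isotropic or isotropic.

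The $\PGU(2,q)$ action on $(x,y)$ arising from~(i) is transitive on the non-isotropic points of $\PG(1,q^2)$ and sharply $3$-transitive on the $q+1$ isotropic ones, so I can send the zeros of $Q$ to any prescribed positions of the correct type. A non-isotropic zero is sent to $(0,1)$, which immediately produces $b=0$, while a pair of isotropic zeros is sent to $(1,\pm\beta^{(q-1)/2})$; these are genuinely isotropic since $\beta^{(q-1)(q+1)/2}=\beta^{(q^2-1)/2}=-1$. The expansions $(x-\beta^{(q-1)/2}y)(x+\beta^{(q-1)/2}y)=x^2-\beta^{q-1}y^2$ and $(x+\beta^{(q-1)/2}y)^2=x^2+2\beta^{(q-1)/2}xy+\beta^{q-1}y^2$ then yield the asserted normalisations of $(b,c)$ in terms of $a$ in the hyperbolic two-line and in the cone one-line sub-cases, respectively. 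In the hyperbolic point sub-case, the second zero remains an unconstrained non-isotropic point, which translates into the open condition $\|c\|\neq-\|a\|$.

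The main technical hurdle I anticipate is the verification that the combined action of~(i) and~(ii) is rich enough to realise the specific scalar normalisation $c=\beta^{(q-1)/2}a$ in the hyperbolic one-line case. After $b=0$ and $\|c\|=-\|a\|$ have been achieved, both the diagonal $(q+1)$-th roots of unity in $\PGU(2,q)$ and the scalings of type~(ii) act on the ratio $c/a$ only by multiplication by $(q+1)$-th roots of unity; hence the orbit of $c/a$ coincides with the fibre of the norm map $\GF(q^2)^*\to\GF(q)^*$ over $-1$. Since $\beta^{(q-1)/2}$ has norm $-1$, it lies in this fibre and $c/a$ can indeed be moved to $\beta^{(q-1)/2}$. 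This orbit-counting argument is elementary but must be carried out with care; once verified, the five assertions of the lemma follow at once.
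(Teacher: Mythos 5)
Your proposal is correct and follows essentially the same route as the paper: both reduce the normalisation of $(a,b,c)$ to the action of $\PGU(2,q)$ on $\PG(1,q^2)$ induced on the pencil of lines of $\Sigma_{\infty}$ through $P_{\infty}$, and use its transitivity properties on the isotropic and non-isotropic orbits to put the linear factors of $ax^2+by^2+cxy$ into standard position. Your additional check that the residual torus fixing the non-isotropic factor realises every value of $c/a$ of norm $-1$ is a correct piece of care that the paper leaves implicit (note only a harmless coordinate-ordering slip in writing the isotropic points as $(1,\pm\beta^{(q-1)/2})$ while expanding the forms $x\mp\beta^{(q-1)/2}y$).
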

\begin{proof}
The stabilizer $\cG$ of $P_{\infty}$ in $\PGU(4,q)$ acts on the points of $\Sigma_{\infty}$
as the automorphism group of a degenerate Hermitian curve. As such, it
has three orbits on the points of $\Sigma_{\infty}$, namely, the
common tangency point $P_{\infty}$, the points of $\Sigma_{\infty}\cap\cH$ different
from $P_{\infty}$ and
those in $\Sigma_{\infty}\setminus\cH$; see \cite[\S 35 page 47]{S}.
Consequently, its
action on the lines through $P_{\infty}$ is the same as that of
$\PGU(2,q)$ on the points of $\PG(1,q^2)$: it affords
two orbits, say $\Lambda_1$ and
$\Lambda_2$ where $\Lambda_1$ consists of the
totally isotropic lines of $\cH$ through $P_{\infty}$ while $\Lambda_2$ contains
the remaining $q^2-q$ lines of $\Sigma_{\infty}$ through $P_{\infty}$.
Recall that $\cG$ is doubly transitive on
$\Lambda_1$ and the stabilizer of any $m\in\Lambda_1$ is
transitive on $\Lambda_2$. Let $\cQ_{\infty}=\cQ\cap\Sigma_{\infty}$.
If $\cQ$ is hyperbolic and $\cC_{\infty}=\{P_{\infty}\}$ we can assume
$\cQ_{\infty}$ to be the union of the line $\ell: x=0$ and
another line, say $ax+cy=0$ with $||a||\neq-||c||$. Thus, $b=0$.
Otherwise,
up to a suitable element $\sigma\in\cG$,
we can always take
$\cQ_{\infty}$ as the union of
any two lines in $\{\ell,m,n \}$
with
\[ \ell\!:\, x=0, \qquad m\!:\, x-\beta^{(q-1)/2}y=0,\qquad
n\!:\, x+\beta^{(q-1)/2}y=0.
\]
Actually,  when $\cC_{\infty}$ contains one line we take $\cQ_{\infty}=\ell\,m$,
while if $\cC_{\infty}$ is the union of two lines we have
$\cQ_{\infty}=mn$.
When $\cQ$ is a cone, we get either $\cQ_{\infty}=\ell^2$ or $\cQ_{\infty}=n^2$.
The lemma follows.
\end{proof}

\begin{lemma}
\label{l2.5}
  Suppose $\rank A_{\infty}=2$ and that $\cC_{\infty}$ is the union
  of two lines.
  Then, $|\Xi_{\infty}|=2q^2+q+1$.
\end{lemma}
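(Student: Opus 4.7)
The plan is to apply Lemma~\ref{main4}(iii) to put $\cQ$ in the normal form $b=-\beta^{q-1}a$, $c=0$ (the hypothesis that $\cC_\infty$ is the union of two lines forces $\cQ$ to be hyperbolic by the first lemma of this section) and then exhibit two disjoint lines of $\PG(3,q)$ lying on $\Xi_\infty$. With $c=0$ the off-diagonal block $\fC$ vanishes, so $A_\infty=\operatorname{diag}(\fA,\fB)$. As already observed, in odd characteristic neither $\fA$ nor $\fB$ can be the zero matrix; hence $\rank A_\infty=2$ forces $\rank\fA=\rank\fB=1$, equivalently $\|a\|=\|b\|=\tfrac14$. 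Since a rank-$2$ quadric in $\PG(3,q)$ is either a pair of planes ($2q^2+q+1$ points) or a single line ($q+1$ points), it suffices to produce two disjoint lines on $\Xi_\infty$.

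To construct such lines I would use a spread-line correspondence. For each direction $[X:Y]\in\PG(1,q^2)$, the $\GF(q)$-linear map $(t_0,t_1)\mapsto\bigl((tX)_0,(tX)_1,(tY)_0,(tY)_1\bigr)$ with $t=t_0+t_1\varepsilon$ is injective (its matrix has $2\times 2$ minors $\|X\|$ and $\|Y\|$), so its image is a $2$-dimensional $\GF(q)$-subspace of $\GF(q)^4$, that is, a line $\cL_{[X:Y]}$ of $\PG(3,q)$. Distinct directions produce disjoint lines, since the coincidence of two points in $\PG(3,q)$ translates to an equality in $\GF(q^2)^2$ that forces $[X:Y]=[X':Y']$. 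If $Q(x_0,x_1,y_0,y_1)$ denotes the quadratic form defining $\Xi_\infty$, a direct substitution using $\Tr(z)=z+z^q$ and $\|z\|=z^{q+1}$ gives
\[Q(tX_0,tX_1,tY_0,tY_1)=\Tr\!\bigl(t^2(aX^2+bY^2+cXY)\bigr)-\|t\|\bigl(X^{q+1}+Y^{q+1}\bigr).\]
Expanding in $(t_0,t_1)$ yields a quadratic form in those variables that vanishes identically if and only if both $aX^2+bY^2+cXY=0$ and $X^{q+1}+Y^{q+1}=0$, which is exactly the condition that $[X:Y]$ parametrise a line of $\cC_\infty$ through $P_\infty$.

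Since $\cC_\infty$ is by hypothesis the union of two distinct lines through $P_\infty$, it corresponds to two distinct directions in $\PG(1,q^2)$, each of which contributes a line $\cL_{[X:Y]}\subseteq\Xi_\infty$, and the two lines are disjoint by the spread property. Therefore $|\Xi_\infty|\geq 2(q+1)>q+1$, ruling out the single-line case and forcing $\Xi_\infty$ to be a pair of planes with exactly $2q^2+q+1$ points. The only step that is not immediate is the identity for $Q(tX,tY)$ together with its expansion in $(t_0,t_1)$: writing $P=P_0+P_1\varepsilon$ and $H=X^{q+1}+Y^{q+1}\in\GF(q)$, the quadratic form in $t_0,t_1$ reads $(2P_0-H)t_0^2+4\varepsilon^2 P_1\, t_0t_1+\varepsilon^2(2P_0+H)t_1^2$, whose three coefficients vanish simultaneously, in odd characteristic, precisely when $P_0=P_1=H=0$.
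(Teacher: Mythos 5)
Your proof is correct, but it takes a genuinely different route from the paper's. The paper normalizes $c=0$, $b=-\beta^{q-1}a$ via Lemma~\ref{main4}, explicitly diagonalizes $A_{\infty}$ by congruence (splitting into the cases $a=-\tfrac{1}{2}$ and $2a_0+1\neq 0$), and then decides between the two rank-$2$ types by a square/non-square analysis of $2a_0+1$ and $2b_0+1$, which requires writing $a=\beta^{s(q-1)}/2$ and discussing the parity of $s$ together with the class of $q$ modulo $4$. You bypass the diagonalization entirely: you use the field-reduction (regular spread) correspondence between points of $\PG(1,q^2)$ and lines of $\PG(3,q)$, and your identity
\[
Q(tX,tY)=\Tr\bigl(t^2(aX^2+bY^2+cXY)\bigr)-t^{q+1}\bigl(X^{q+1}+Y^{q+1}\bigr),
\]
which I have checked against \eqref{eqodd1} and is correct, shows that the spread line $\cL_{[X:Y]}$ lies on $\Xi_{\infty}$ exactly when $[X:Y]$ solves System~\eqref{sinf}, i.e.\ when it is the direction of a line of $\cC_{\infty}$ through $P_{\infty}$. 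Two lines in $\cC_{\infty}$ thus give two disjoint lines on $\Xi_{\infty}$, so $|\Xi_{\infty}|\geq 2(q+1)>q+1$, which rules out the pair of conjugate planes and forces the pair of distinct planes. What your approach buys: it never actually uses the normal form $c=0$, $b=-\beta^{q-1}a$ (only $\rank A_{\infty}=2$ and the hypothesis on $\cC_{\infty}$), it avoids the delicate quadratic-residue computations, and, combined with the observation that every plane of $\PG(3,q)$ contains exactly one line of a regular spread, the same identity also yields the companion Lemma~\ref{lc}. What the paper's computation buys is the explicit congruence normal form of $A_{\infty}$, which identifies the two planes concretely. All the supporting steps you flag (injectivity of $t\mapsto(tX,tY)$ via the minors $X^{q+1}$, $Y^{q+1}$; disjointness of distinct spread lines; the expansion $(2P_0-H)t_0^2+4\varepsilon^2P_1t_0t_1+\varepsilon^2(2P_0+H)t_1^2$ and the fact that its vanishing forces $P=H=0$ in odd characteristic) are sound.
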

\begin{proof}
  By Lemma \ref{main4},  assume $c=0$ and $b=-\beta^{q-1}a$.
 Observe that with this choice $b_0\neq \pm\frac{1}{2}$.
We distinguish two cases.
\begin{enumerate}
\item If $a=-\frac{1}{2}$, then $b=\frac{1}{2}\beta^{q-1}$. Let $M$ be
the nonsingular matrix
  \[ M=\begin{pmatrix}
    1 & 0 & 0 & 0 \\
    0 & 1 & 0 & 0 \\
    0 & 0 & (2b_0+1)\varepsilon^2 & 0 \\
    0 & 0 & -2b_1\varepsilon^2 & 1
    \end{pmatrix}; \]
a direct computation shows that
\[ M^TA_{\infty}M=\begin{pmatrix}
  -2 & 0 & 0 & 0 \\
   0 & 0 & 0 & 0 \\
   0 & 0 & 0 & 0 \\
   0 & 0 & 0 & (2b_0+1)\varepsilon^2
   \end{pmatrix}. \]
Here,
\[ 2b_0+1=b^q+b+1=\frac{1}{2}
\left(\frac{\beta^{q-1}+1}{\beta^{(q-1)/2}}\right)^2\!\!.\]

Hence $\Xi_{\infty}$ is projectively equivalent to
 \[-2t^2+(2b_0+1)\varepsilon^2w^2=0\]
and it is the union of two distinct planes if and only if
 $2(2b_0+1)\varepsilon^2$ is a square of $\GF(q)$.

As $(\beta^{(q+1)/2})^q=-\beta^{(q+1)/2}$,
 we have
$ \left(\frac{\beta^{q-1}+1}{\beta^{(q-1)/2}}\right)^q
=-\frac{\beta^{q-1}+1}{\beta^{(q-1)/2}}$.
Thus,
$\frac{\beta^{q-1}+1}{\beta^{(q-1)/2}}$ is not an element of $\GF(q)$.
It follows that
\[2(2b_0+1)\varepsilon^2=\left(\frac{\beta^{q-1}+1}{\beta^{(q-1)/2}}\right)^2\varepsilon^2\]
is the product of two non-squares and hence it is a square. 
The quadric $\Xi_{\infty}$ is reducible in the union of  two distinct planes and our lemma follows.
\item
 Consider now the case $2a_0+1\neq 0$.
 Take as $M$ the non-singular matrix
  \[ M=\begin{pmatrix}
    (2a_0+1)\varepsilon^2 & 0 & 0 & 0 \\
    -2a_1\varepsilon^2 & 1 & 0 & 0 \\
    0 & 0 & (2b_0+1)\varepsilon^2 & 0 \\
    0 & 0 & -2b_1\varepsilon^2 & 1
    \end{pmatrix}. \]
A straightforward computation proves that
\[ M^TA_{\infty}M=\begin{pmatrix}
   0 & 0 & 0 & 0 \\
   0 & (2a_0+1)\varepsilon^2 & 0 & 0 \\
   0 & 0 & 0 & 0 \\
   0 & 0 & 0 &  (2b_0+1)\varepsilon^2
 \end{pmatrix}.\]
 In particular, the quadric $\Xi_{\infty}$ defined by $A_{\infty}$ is
 projectively equivalent to
 \[(2a_0+1)u^2+(2b_0+1)w^2=0.\]
 From $4a^{q+1}=1= 4b^{q+1}$ and $b=-a\beta^{q-1}$ we get
 $a=\frac{\beta^{s(q-1)}}{2}$ and $b=-\frac{\beta^{(s+1)(q-1)}}{2}$
 for some value of $s\in\{0,1,\ldots,q+1\}$.
 As $2a_0+1=a^q+a+1$ and $2b_0+1=b^q+b+1$,
 \[2a_0+1=\frac{1}{2}\left(\frac{\beta^{s(q-1)}+1}{\beta^{s(q-1)/2}}\right)^2
 \!\!,\qquad
2b_0+1=-\frac{1}{2}
 \left(\frac{\beta^{(s+1)(q-1)}-1}{\beta^{(s+1)(q-1)/2}}\right)^2\!\!. \]
 If $s$ is odd, then
 $\left(\frac{\beta^{(s+1)(q-1)}-1}{\beta^{(s+1)(q-1)/2}}\right)$
and
 $\left(\frac{\beta^{s(q-1)}+1}{\beta^{s(q-1)/2}}\right)$ are in
 $\GF(q^2)\setminus\GF(q)$ while for $s$ even they are elements of $\GF(q)$.

Thus \[-(2a_0+1)(2b_0+1)=\frac{1}{4}
 \left(\frac{\beta^{(s+1)(q-1)}-1}{\beta^{(s+1)(q-1)/2}}\right)^2\left(\frac{\beta^{(s+1)(q-1)}-1}{\beta^{(s+1)(q-1)/2}}\right)^2\] is a square and
the quadric is reducible in the union of two  planes.

\end{enumerate}
\end{proof}
\begin{lemma}
\label{lc}
Suppose $\cQ$ to be a hyperbolic quadric with
$\rank A_{\infty}=2$  and that $\cC_{\infty}$ is just a point.
Then, $|\Xi_{\infty}|=q+1$.
\end{lemma}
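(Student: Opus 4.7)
The plan is to reduce $A_\infty$ to a diagonal normal form whose $\GF(q)$-zero set is transparent, in the spirit of Lemma \ref{l2.5}. By Lemma \ref{main4}, since $\cQ$ is hyperbolic and $\cC_\infty = \{P_\infty\}$, I may assume $b = 0$ and $||c|| \neq -||a||$. The subcase $c = 0$ is incompatible with $\rank A_\infty = 2$: with $b = c = 0$ the matrix $A_\infty$ is block-diagonal in $\fA$ and $\fB$, and $\fB = \mathrm{diag}(-1,\varepsilon^2)$ already has rank $2$, while $\fA = 0$ cannot hold in odd characteristic (this would simultaneously require $a_0 = 1/2$ and $a_0 = -1/2$). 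Hence $c \neq 0$.

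With $c \neq 0$, the analysis surrounding equation \eqref{a0a1} in the proof of Lemma \ref{main2} applies. The displayed expressions for $a_0, a_1$, evaluated at $b_0 = b_1 = 0$, force $a = 0$, and the residual rank-$2$ constraint $c^{q+1} + 4b^{q+1} = 1$ becomes $c^{q+1} = 1$.

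Substituting $a = b = 0$ and invoking $c_0^2 - \varepsilon^2 c_1^2 = c^{q+1} = 1$, the quadratic form associated with $A_\infty$ is
\[
 -t^2 + \varepsilon^2 u^2 - v^2 + \varepsilon^2 w^2 + 2c_0 tv + 2\varepsilon^2 c_1 tw + 2\varepsilon^2 c_1 uv + 2\varepsilon^2 c_0 uw.
\]
The main step of the proof is a completion of the square, rewriting this form as $-T^2 + \varepsilon^2 U^2$ with $T = t - c_0 v - \varepsilon^2 c_1 w$ and $U = u + c_1 v + c_0 w$; equivalently, one exhibits an explicit non-singular $M$, in the style of the matrices used in Lemma \ref{l2.5}, such that $M^T A_\infty M = \mathrm{diag}(-1,\varepsilon^2,0,0)$. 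The decisive identity that makes the cross-terms collapse is precisely $c_0^2 - \varepsilon^2 c_1^2 = 1$.

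Finally, since $\varepsilon^2$ is a non-square of $\GF(q)$, the equation $T^2 = \varepsilon^2 U^2$ admits over $\GF(q)$ only the trivial solution $T = U = 0$. Hence $\Xi_\infty$ is the line of $\PG(3,q)$ defined by $T = U = 0$ and contains exactly $q + 1$ points. The sole technical obstacle is the algebraic verification of the completion of the square, which pivots on $c^{q+1} = 1$; beyond this bookkeeping no conceptual difficulty is anticipated.
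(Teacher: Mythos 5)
Your proof is correct and follows essentially the same route as the paper: after reducing to $b=0$ via Lemma \ref{main4} and using \eqref{a0a1} to force $a=0$ and $c^{q+1}=1$, you bring the form to $-T^2+\varepsilon^2U^2$ (your completion of the square is exactly the congruence by the triangular matrix $M$ used in the paper) and conclude that the $\GF(q)$-rational points form the line $T=U=0$. The only addition is your explicit elimination of the subcase $c=0$, which the paper leaves implicit; the claimed cross-term cancellation does check out, pivoting as you say on $c_0^2-\varepsilon^2c_1^2=1$.
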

\begin{proof}
By Lemma \ref{main4}, we may assume $b=0$.
The equations in \eqref{a0a1} give now  $a=0$ and $||c||=1$.
Take
\[ M=\begin{pmatrix}
  1 & 0 & c_0 & c_1\varepsilon^2 \\
  0 & 1 & -c_1&-c_0 \\
  0 & 0 & 1 & 0 \\
  0 & 0 & 0 & 1
  \end{pmatrix}.\]
Thus,
\[ M^TA_{\infty}M=\begin{pmatrix}
  -1 & 0 & 0 & 0 \\
  0 & \varepsilon^2 & 0 & 0 \\
  0 & 0 & c_0^2-c_1^2\varepsilon^2-1 & 0 \\
  0 & 0 & 0 & c_1^2\varepsilon^4+(1-c_0^2)\varepsilon^2
  \end{pmatrix} \]

As $||c||=c_0^2-\varepsilon^2c_1^2=1$,
the quadric $\Xi_{\infty}$ defined by
$A_{\infty}$ is equivalent to $\varepsilon^2u^2-t^2=0$. As
$\varepsilon\not\in\GF(q)$, this is the union of two conjugate
planes and it consists of just $q+1$ points.
\end{proof}

When $\cQ$ is elliptic, clearly $\cC_{\infty}=\{P_{\infty}\}$.
The possible sizes for the affine part of
$\cH\cap\cQ$
correspond to cases (C\ref{C1})--(C\ref{C6}) of Lemma \ref{main2}, whence
the theorem follows.

Consider now the case in which $\cQ$ is a cone. Here $\cC_{\infty}$ is
either a point or one line; once more,
the size of the affine part of $\cH\cap\cQ$
falls in one of cases (C\ref{C1})--(C\ref{C6}) of Lemma \ref{main2}.
If $\cC_{\infty}=\{P_{\infty}\}$, by Lemma \ref{main4}, we can assume
$b=c=0$. Under this assumption for $q>3$ all cases (C\ref{C1})--(C\ref{C6})
of Lemma \ref{main2} may occur. For $q=3$ the only square in $\GF(3)$ is 
$1$ and $\det A_{\infty}=1-4a^{q+1}=1$ yields $a=0$.
 Thus, cases (C\ref{C1}) and (C\ref{C3}) cannot occur.

When $\cC_{\infty}$ consists of one line, then, by Lemma \ref{main4},
we can assume
$b=\beta^{(q-1)}a$ and  $c=2\beta^{(q-1)/2}a$. Consequently,
$\det A_{\infty}=1$ and only cases (C\ref{C1}) and (C\ref{C3}) may occur.

If $\cQ$ is a hyperbolic quadric, then we have three possibilities
for $\cC_{\infty}$.
When $\cC_{\infty}$ consists of two lines, by Lemma \ref{main4}, we can assume
$b=-\beta^{(q-1)}a$, $c=0$. Consequently,
$\det A_{\infty}=(4a^{q+1}-1)^2$.  If $4a^{q+1}\neq 1$, then $\det
A_{\infty}$ is a non-zero square in $\GF(q)$ and cases (C\ref{C1}) and (C\ref{C3})
in
Lemma \ref{main2} may occur.
If $4a^{q+1}=1$, then $\rank A_{\infty}=2$ and either
(C\ref{C7}) or (C\ref{C8}) occurs. By Lemma \ref{l2.5}
it follows that in these latter cases
$\Xi_{\infty}$ is the union of two planes. Thus,
Case (C\ref{C7}) yields $|\cH\cap\cQ\cap\AG(3,q^2)|=q^3-q^2$, while from
Case (C\ref{C8}) we obtain $|\cH \cap \cQ \cap\AG(3,q^2)|=2q^3-q^2$.

Suppose now $\cC_{\infty}$ to be just one line.  By Lemma \ref{main4}
we can assume $b=0$, $c=\beta^{{q-1}/2}a$. This implies $\det
A_{\infty}=(a^{q+1}-1)^2$, which is a square in $\GF(q)$.
When this is zero we have
$\rank A_\infty=3$. Thus only cases (C\ref{C1}), (C\ref{C3}), (C\ref{C5})
and (C\ref{C6}) in Lemma \ref{main2} might happen.

Finally, if $\cC_{\infty}=\{P_{\infty}\}$, then all  cases (C\ref{C1})--(C\ref{C8})
might occur. When $\rank A_{\infty}=2$, from Lemma
\ref{lc} we have that $\Xi_{\infty}$ is a line, thus from (C\ref{C7}) we
get $|\cH \cap \cQ \cap\AG(3,q^2)|=q^3+q^2$, whereas from  (C\ref{C8})
$|\cH \cap \cQ \cap\AG(3,q^2)|=q^2$. Our Theorem \ref{main1} follows.

\section{Extremal configurations}
\label{EC}
It is possible to characterize the configurations arising when
the intersection size is either $q^2+1$ or $2q^3+q^2+1$. These are
respectively the minimum and the maximum yielded by Theorem \ref{main1}.
and they can happen only when $\cQ$ is an hyperbolic quadric.
Throughout this section we assume that the hypotheses of
Theorem \ref{main1} hold, namely that $\cH$ and $\cQ$ share a tangent
plane at some point $P$.
We prove the following two theorems.
\begin{theorem}
\label{ppt1}
Suppose $|\cH\cap\cQ|=q^2+1$. Then, $\cQ$ is a hyperbolic quadric
and $\Omega=\cH\cap\cQ$ is an elliptic quadric contained in a subgeometry
$\PG(3,q)$ embedded in $\PG(3,q^2)$.
\end{theorem}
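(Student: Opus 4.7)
The plan is to trace through the case analysis of the proof of Theorem~\ref{main1} to identify the configuration yielding $q^2+1$ points, to normalise $\cQ$ to a simple canonical form, and finally to exhibit $\Omega=\cH\cap\cQ$ as an elliptic quadric of a Baer subgeometry. The value $q^2+1$ appears in Lemma~\ref{main2} only via Case (C\ref{C8}) in the sub-case where $\Xi$ is a plane (contributing $q^2$ affine points), together with $|\cC_{\infty}|=1$. From the discussion at the end of Section~\ref{PT} this forces $\cQ$ hyperbolic with $\cC_{\infty}=\{P_\infty\}$, and Lemmas~\ref{main4} and~\ref{lc} give $a=b=0$ and $c^{q+1}=1$. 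A short row-reduction on the $5\times5$ matrix $A$ will show that the further requirement $\rank A=2$ is equivalent to the two additional relations $e=-d^q$ and $2f_0=-d^{q+1}$.

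Next I would exploit the stabiliser of $P$ in $\PGU(4,q)$ to bring $\cQ$ to canonical form. The diagonal scalings $(x,y,z)\mapsto(\alpha x,\beta y,z)$ with $\alpha^{q+1}=\beta^{q+1}=1$ preserve $\cH$ and allow one to set $c=1$. The \emph{Hermitian translations}
\[ (x,y,z)\mapsto(x+\alpha,\,y+\beta,\,z+\alpha^q x+\beta^q y+\gamma),\qquad\gamma+\gamma^q=\alpha^{q+1}+\beta^{q+1}, \]
also preserve $\cH$, and transform the coefficients of $\cQ:z=xy+dx+ey+f$ according to $d'=d+\alpha^q-\beta$ and $e'=e+\beta^q-\alpha$; the linear system $d'=e'=0$ is solvable precisely because $e=-d^q$. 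Since $\rank A$ is a projective invariant, the rank-$2$ condition in the new coordinates then forces $f_0'=0$, and the residual $z$-translation freedom $z\mapsto z+\gamma$ with $\gamma+\gamma^q=0$ eliminates the imaginary part of $f$. The outcome is the canonical form $\cQ:z=xy$.

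Substituting $z=xy$ into the equation of $\cH$ yields
\[ x^q y^q+xy=x^{q+1}+y^{q+1}\iff(x-y^q)(x^q-y)=0, \]
so the affine part of $\Omega$ consists of the $q^2$ points $(t,t^q,t^{q+1})$, $t\in\GF(q^2)$. Together with $P_\infty$ these admit the uniform parametrisation
\[ \Omega=\bigl\{(a^{q+1}:a^qb:ab^q:b^{q+1}):(a:b)\in\PG(1,q^2)\bigr\},\qquad|\Omega|=q^2+1. \]

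Finally, consider the Baer involution $\sigma:(J:X:Y:Z)\mapsto(J^q:Y^q:X^q:Z^q)$. A direct verification gives $\sigma(P)=P$ for every $P\in\Omega$, so $\Omega$ is contained in the $\PG(3,q)$ subgeometry $\Sigma_0$ of $\sigma$-fixed points. In the $\GF(q)$-coordinates $(j,u,v,z)$ on $\Sigma_0$ defined by $J=j$, $Z=z$, $X=u+\varepsilon v$, $Y=u-\varepsilon v$, the identity
\[ jz=a^{q+1}b^{q+1}=(a^qb)(ab^q)=(u+\varepsilon v)(u-\varepsilon v)=u^2-\varepsilon^2v^2 \]
shows that $\Omega$ lies on the quadric $\cV:u^2-\varepsilon^2v^2-jz=0$ of $\Sigma_0$. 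The discriminant of $\cV$ equals, up to nonzero square factors, $\varepsilon^2$, a non-square of $\GF(q)$; hence $\cV$ is elliptic and, having exactly $q^2+1$ points, coincides with $\Omega$. The chief technical hurdle is the normalisation in the second paragraph, namely working out the full rank-$2$ conditions on $A$ and verifying that the available Hermitian collineations do realise the reduction to $z=xy$; the remaining steps are direct parametrisations or a one-line discriminant check.
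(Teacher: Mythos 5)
Your proof is correct, but it takes a genuinely different route from the paper. The paper argues synthetically: it first shows by a counting argument that $\Omega$ must be an ovoid of the hyperbolic quadric $\cQ$ (a generator meeting $\cH$ in more than one point would force $|\Omega|\geq q^2+q+1$), deduces that $\cH$ and $\cQ$ share tangent planes at every point of $\Omega$, observes that $\Omega$ contains a $4$--simplex, and then invokes Segre's result that the polarities $\rho$ and $\theta$ of $\cH$ and $\cQ$ commute, so that $\Psi=\rho\theta$ is a Baer involution fixing $\Omega$ pointwise; the subgeometry of fixed points does the rest. You instead push the coordinate machinery of Section~\ref{PT} to its conclusion: you locate $q^2+1$ as arising only from Case~(C\ref{C8}) of Lemma~\ref{main2} with $\Xi$ a plane and $\cC_\infty=\{P_\infty\}$, extract the rank-$2$ conditions on $A$, normalise $\cQ$ to $z=xy$ by unitary collineations, and read off $\Omega$ explicitly as $\{(a^{q+1}:a^qb:ab^q:b^{q+1})\}$ together with its Baer involution and the elliptic equation $u^2-\varepsilon^2v^2=jz$. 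Your approach buys an explicit canonical form and a completely self-contained verification (no appeal to \cite[\S 83]{S}), at the cost of the matrix computation you flag as the technical hurdle; the paper's approach is shorter and coordinate-free but leans on classical results about permutable polarities. I verified your deferred computation: with $a=b=0$ and $c^{q+1}=1$, the condition $\rank A=2$ is equivalent to $e=-c\,d^q$ and $2f_0=-d^{q+1}$ (note the factor $c$, which you omit; it becomes your stated relation $e=-d^q$ only after the scaling that sets $c=1$, so the solvability of $d'=e'=0$ still goes through exactly as you claim). The remaining steps --- the factorisation $(x-y^q)(x^q-y)=0$, the fixing of $\Omega$ by $\sigma$, and the non-square discriminant $\varepsilon^2$ forcing $\cV$ elliptic with exactly $q^2+1$ points --- all check out.
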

\begin{proof}
  By Theorem \ref{main1}, $\cQ$ is hyperbolic.
  We first show that $\Omega$ must be an ovoid of $\cQ$.
  Indeed, suppose there is a generator $r$ of $\cQ$ meeting $\cH$ in
  more than $1$ point. Then, $|r\cap\cH|\geq q+1$.
  On the other hand, any generator $\ell\neq r$ of $\cQ$ belonging to the same
  regulus $\cR$ as $r$
  necessarily meets $\cH$ in at least one point. As there
  are $q^2$ such generators, we get $|\Omega|\geq q^2+q+1$ --- a
  contradiction. In particular,
  by the above argument, any generator $\ell$ of $\cQ$ through a point of
  $\Omega$ must be tangent to $\cH$. Thus, at all points $P\in\Omega$
  the tangent planes   to $\cH$ and to $\cQ$  are the same.
  A direct counting argument shows that $\Omega$ contains a $4$--simplex.
  Let $\rho$ and $\theta$ be the polarities induced respectively by
  $\cH$ and $\cQ$, and denote by $\Psi=\rho\theta$ the collineation
  they induce. By \cite[\S 83]{S}, $\rho$ and $\theta$ commute.
  Thus, $\Psi$ is an involution  fixing pointwise $\Omega$ and, with
  respect to any fixed frame, it acts as
  the conjugation $X\to X^q$.
  It follows that $\Omega$ is contained in the Baer subgeometry
  $\PG(3,q)$ fixed by $\Psi$. Actually we see that this is the complete
  intersection of $\cQ$ with this subgeometry.
\end{proof}
\begin{theorem}
\label{ppt2}
Suppose $|\cH\cap\cQ|=2q^3+q^2+1$. Then, $\cQ$ is a hyperbolic quadric
in permutable position with $\cH$. In particular, there is a quadric
$\cQ'\subseteq\cH\cap\cQ$, contained in a subgeometry $\PG(3,q)$,
such that all points of $\cH\cap\cQ$ lie on
(extended) generators of $\cQ'$.
\end{theorem}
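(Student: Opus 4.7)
The plan is to identify the unique configuration in which the extremal size $2q^3+q^2+1$ is attained, and then extract from it the promised Baer subgeometry and quadric $\cQ'$. By Theorem \ref{main1}, the size $2q^3+q^2+1$ forces $\cQ$ hyperbolic. Tracing back through the case analysis following Lemma \ref{main2}, this value is reached only through case (C\ref{C8}) combined with $\cC_{\infty}$ being the union of two lines, since $(2q^3-q^2)+(2q^2+1)=2q^3+q^2+1$. By Lemma \ref{main4}, I may therefore normalize so that $c=0$, $b=-\beta^{q-1}a$, and the rank condition $\rank A_{\infty}=2$ from Case (C\ref{C8}) pins down $4a^{q+1}=1$; that $\rank A=2$ further imposes explicit $\GF(q)$-linear relations on $(d,e,f)$ which I would work out.

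Next I would extract the geometric content of ``$\Xi$ is a pair of solids in $\PG(4,q)$''. Under the identification $(x,y)\leftrightarrow (x_0,x_1,y_0,y_1)$, each solid is cut out by a trace-linear equation $\Tr(Ax)+\Tr(By)=\gamma$ for suitable $A,B\in\GF(q^2)$ and $\gamma\in\GF(q)$, and I would show that the fibre of such an equation through any point of $\cH\cap\cQ$ contains an entire $\GF(q^2)$-line lying on both $\cH$ and $\cQ$. Varying the fibre inside one solid produces $q+1$ mutually skew lines of $\cQ$ (one regulus), and the other solid gives $q+1$ lines of the opposite regulus; their closures at infinity are precisely the two lines $m,n$ of $\cC_{\infty}$ from Lemma \ref{main4}. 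The count $2(q+1)(q^2+1)-(q+1)^2=2q^3+q^2+1$ shows these $2(q+1)$ full projective lines exhaust $\cH\cap\cQ$.

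To locate the Baer subgeometry I would imitate the polarity argument of Theorem \ref{ppt1}. Denote by $\rho,\theta$ the polarities of $\cH$ and $\cQ$. The $(q+1)^2$ pairwise intersection points of lines from opposite reguli are points of $\cH\cap\cQ$ at which, by the argument used for the ovoid case, the tangent planes to $\cH$ and to $\cQ$ coincide; this yields more than enough common pole/polar pairs (a self-polar simplex, indeed four such) to force $\rho\theta=\theta\rho$ by \cite[\S 83]{S}, i.e. $\cQ$ is in permutable position with $\cH$. Then $\Psi:=\rho\theta$ is an involutory collineation of $\PG(3,q^2)$; its fixed-point set is a Baer subgeometry $\Sigma\cong\PG(3,q)$. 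Setting $\cQ':=\cQ\cap\Sigma$, both reguli constructed above are $\Psi$-invariant and each of their $q+1$ lines meets $\Sigma$ in a line: these $2(q+1)$ lines are precisely the generators of the hyperbolic quadric $\cQ'\subseteq\Sigma$. Every point of $\cH\cap\cQ$ lies on an extended generator of $\cQ'$, as required.

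The main obstacle is the verification of permutability. The cleanest route is the geometric one just sketched, which avoids matrix calculations provided one knows that the $(q+1)^2$ points carry coinciding tangent planes; establishing this rigorously requires the preliminary step of showing that $\cH\cap\cQ$ actually consists of $2(q+1)$ lines, which is the real content of the second paragraph above. An alternative, purely algebraic route is to write the Gram matrices of $\cH$ and $\cQ$ in the normalization of Lemma \ref{main4} with the derived constraints on $(d,e,f)$ and check Segre's commutation identity $QH^{-1}Q^{(q)}=\lambda H$ directly; this is routine but tedious and I would reserve it as a backup if the geometric argument needs buttressing.
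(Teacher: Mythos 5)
Your second half coincides with the paper's proof: once you have at least $q+1$ generators of each regulus lying entirely on $\cH$, you form the set of their pairwise intersection points, observe that the tangent planes to $\cH$ and $\cQ$ agree there (both generators through such a point are generators of $\cH$, hence span the common tangent plane), extract a $4$--simplex, and invoke Segre \cite[\S 83]{S} for permutability and \cite[\S 75]{S} for the quadric $\cQ'$ in a Baer subgeometry. Your closing count $2(q+1)(q^2+1)-(q+1)^2=2q^3+q^2+1$, showing the $2(q+1)$ generators exhaust $\cH\cap\cQ$, is a nice explicit touch that the paper leaves implicit.

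The genuine gap is in how you produce those two families of generators. You route everything through the coordinate normalization of Lemma \ref{main4} and the structure of $\Xi$ as a pair of solids of $\AG(4,q)$, and then assert that each solid ``contains an entire $\GF(q^2)$--line through every point'' and decomposes into $q+1$ lines of a regulus. A $3$--dimensional $\GF(q)$--affine subspace of $\AG(2,q^2)$ is not in general a union of $\GF(q^2)$--lines, and nothing in your sketch forces the relevant $2$--dimensional $\GF(q)$--subspaces to be $\GF(q^2)$--linear, let alone generators of $\cQ$ lying on $\cH$; you acknowledge this is ``the real content'' but do not supply it. The paper avoids this entirely with a short combinatorial argument: a line of $\PG(3,q^2)$ meets $\cH$ in $1$, $q+1$ or $q^2+1$ points, so writing $r_1+(q+1)r_2+(q^2+1)(q^2+1-r_1-r_2)=2q^3+q^2+1$ for the lines of one regulus and reducing modulo $q$ and $q-1$ gives $r_1=(q-1)s$, $r_2=qt$ with $s+t=q-1$, whence $r_1+r_2\le q^2-q$ and at least $q+1$ generators of each regulus lie on $\cH$. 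You should replace your second paragraph with this counting argument (or complete the solid-decomposition claim in detail); as written, the step your whole construction rests on is not proved.
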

Theorem \ref{ppt2} can be obtained as a consequence of
the analysis contained in \cite[\S 5.2.1]{E0},
in light of \cite[Lemma 19.3.1]{FPS3d}.
Here we present a direct argument.
\begin{proof}
  By Theorem \ref{main1}, $\cQ$ is a hyperbolic quadric.
  Let now $\cR_1$ and $\cR_2$ be the two reguli of $\cQ$ and
  denote by $\Omega_i$ the set of lines of $\cR_i$ which are also
  generators of $\cH$.
  Any line of
  $\cR_i$ meets $\cH$ in  either $1,q+1$ or $q^2+1$ points; let
  $r_1^i,r_2^i,r_3^i$ be the respective number of lines; in
  particular $r_3^i=|\Omega_i|$.
  Then,
  \[ r_1^i+(q+1)r_2^i+(q^2+1)(q^2+1-r_1^i-r_2^i)=2q^3+q^2+1. \]
  After some direct manipulations we get
   \[ q^2(q-1)^2=q^2(r_1^i+r_2^i)-qr_2^i; \]
   whence,
   \[ q((q-1)^2-r_1^i)=r_2^i(q-1). \]
   Thus, there are integers $t$ and $s$ such that
   $r_1^i=(q-1)s$ and $r_2^i=qt$; and
   $s+t=q-1$.
   By the above argument,
   there are at least $(q^2+1)-(q^2-q)=q+1$ generators of
   $\cQ$ in each $\cR_i$
   which belong to $\Omega_i$.
   Consider the set
   \[ \cQ'=\{ P_{\fx\fy}=\fx\cap\fy : \fx\in\Omega_1, \fy\in\Omega_2 \}. \]
   At any point $P_{\fx\fy}\in\cQ'$, the tangent plane to $\cQ$ and
   the tangent plane to $\cH$ are the same.
   Furthermore, as $q+1\geq 3$, there is at least a $4$--simplex
   contained in $\cQ'$. Thus, by \cite[\S 83]{S}, the quadric
   $\cQ$ and the Hermitian surface $\cH$ are permutable, see also
   \cite[\S 19.3]{FPS3d}, and, by
   \cite[\S 77 page 135]{S},
   $\cQ'$ is a hyperbolic quadric contained in a subgeometry
   $\PG(3,q)$.
\end{proof}

\end{document}